\documentclass[11pt]{article}
\usepackage{mathtools,amsthm,amsfonts,amssymb,mathrsfs,epsfig,bbm}
\usepackage{microtype}	
\usepackage[plain,noend]{algorithm2e}

\usepackage[top=3cm, bottom=3cm, left=3cm, right=3cm, heightrounded, marginparwidth=2.5cm, marginparsep=3mm]{geometry}

\parindent     0mm
\parskip        \medskipamount

\usepackage[margin=1cm]{caption}	
\captionsetup{labelfont = {normalfont}, font = {small, it}}

\usepackage[usenames,dvipsnames]{xcolor}
\usepackage[colorlinks=true]{hyperref}

\definecolor{mygreen}{rgb}{0.05, 0.576, 0.03} 		
\definecolor{mygray}{gray}{0.9}					
\definecolor{myred}{rgb}{0.768, 0.09, 0.09} 		
\definecolor{deeppink}{RGB}{255,20,147}

\definecolor{deepblue}{RGB}{20,10,130}

\newtheorem{theorem}{Theorem}
\newtheorem{lemma}{Lemma}
\newtheorem{corollary}{Corollary}

\newtheorem*{theorem*}{Theorem}

\newtheorem{remark}{Remark}

\def\N{\mathbb{N}}
\def\Z{\mathbb{Z}}

\def\R{\mathbb{R}}

\def\P{\mathbb{P}}
\def\E{\mathbb{E}}

\def\NN{\mathcal N}

\renewcommand{\phi}{\varphi}
\renewcommand{\epsilon}{\varepsilon}

\newcommand{\ind}[1]{\mathbbm{1}_{\{#1\}}}

\long\def\symbolfootnote[#1]#2{\begingroup
\def\thefootnote{\fnsymbol{footnote}}\footnote[#1]{#2}\endgroup}

\newcommand{\keywords}[1]{ \noindent {\footnotesize
             {\small \em Keywords and phrases.} {\sc #1} } }
\newcommand{\ams}[2]{  \noindent {\footnotesize
             {\small \em AMS {\rm 2010} subject classification.
             {\rm Primary {\sc #1}; secondary {\sc #2}} } } }

\newcounter{para}

\newcommand{\egd}{\overset{(d)}{=}}

\def\fm{\mathfrak m}

\def\WW{\mathcal W}
\def\+.{+ \text{\tiny \raisebox{0.5mm}{$\bullet$}}}

\renewcommand{\bar}[1]{\overline{#1}}

\newcommand{\Addresses}{{
  \bigskip
  \footnotesize

  Sergey Foss,\\
  \textsc{Sobolev Institute of Mathematics, Siberian Branch of the Russian Academy of Sciences, Novosibirsk, Russia} \\
  \textsc{Novosibirsk State University, Novosibirsk, Russia}\\
  \textsc{School of Mathematical and Computer Sciences, Heriot-Watt University, Edinburgh, UK} \\
  \textit{E-mail address}: \texttt{sergueiorfoss25@gmail.com}
  
  \medskip

  Takis Konstantopoulos, \\
  \textsc{Department of Mathematical Sciences, University of Liverpool, Liverpool, UK} \\
  \textit{E-mail address}: \texttt{takiskonst@gmail.com}

\medskip

  Bastien Mallein, \\
   \textsc{Universit\'e Sorbonne Paris Nord, LAGA, UMR 7539, F- 93430, Villetaneuse, France}\\
  \textit{E-mail address}: \texttt{mallein@math.univ-paris13.fr}
  
  \medskip

  Sanjay Ramassamy, \\
  \textsc{Universit\'e Paris-Saclay, CNRS, CEA, Institut de physique th\'eorique, 91191 Gif-sur-Yvette, France}\\
  \textit{E-mail address}: \texttt{sanjay.ramassamy@ipht.fr}

}}

\begin{document}

\title{\Large \bf Estimation of the last passage percolation constant in a charged
complete directed acyclic graph via perfect simulation} 
\author{\sc Sergey Foss, Takis Konstantopoulos, Bastien Mallein\\ and \sc Sanjay Ramassamy}
\date{}
\maketitle

\begin{abstract}
Our object of study is the asymptotic growth of heaviest paths in a charged (weighted with signed weights) complete directed acyclic graph. Edge charges are i.i.d.\ random variables with common distribution $F$ supported  on $[-\infty,1]$ with essential supremum equal to $1$ (a charge of $-\infty$ is understood as the absence of an edge). The asymptotic growth rate is a constant that we denote by $C(F)$. 
Even in the simplest case where $F=p\delta_1 + (1-p)\delta_{-\infty}$, corresponding to the longest path in the Barak-Erd\H{o}s random graph, there is no closed-form expression for this function, but good bounds do exist. In this paper we construct a Markovian particle system that we call ``Max Growth System'' (MGS), and show how it is related to the charged random graph. The MGS is a generalization of the Infinite Bin Model  that has been the object of study of  a number of papers. We then identify a random functional of the process that admits a stationary version and whose expectation equals the unknown constant $C(F)$. Furthermore, we construct an effective perfect simulation algorithm for this functional which produces samples from the random functional.
\end{abstract}

\keywords{perfect simulation, coupling (from the past), random graph, Markov process, stationarity, last passage percolation}

\ams{82M31}{60K15, 60G10, 05C80}

\section{Preliminaries}
A  Barak-Erd\H{o}s random graph is a directed acyclic version of the standard Erd\H{o}s-R\'enyi graph \cite{BE84}. We let $\Z^+$, the set of non-negative integers, serve as the set of vertices. For each pair of vertices $i, j$ with $i < j$, declare $(i,j)$ as an edge directed from $i$ to $j$ with probability $p$, independently from any other pair. Then the maximum length $L_n$ of all paths from vertex $0$ to $n$ satisfies a law of large numbers \cite{NEWM92,FK03}:
 $\lim_{n \to \infty} L_n/n = C(p)$, a.s., where $C(p)$ is a certain deterministic, increasing, analytic function of $0<p\leq1$ \cite{MR16, MR19}. Owing to the fact that such a graph appears as a model in various natural applications, such as in computer systems \cite{GEL86,ISONEW94}, in mathematical ecology \cite{NEWM92,NEWCOH86} and others, information about $C(p)$ has been the subject of a number of papers \cite{NEWM92, FK03, DFK12, FMS14, MR16, MR19, FK18, FKP20}.
 
Longest paths in Barak-Erd\H{o}s graphs may be seen as a special case of the last passage percolation model, which studies the growth rate of the length of the longest path in a directed acyclic graph whose edges and/or vertices are equipped with random weights. By contrast, first passage percolation is a model studying shortest paths between two points in a graph (usually undirected) whose edges and/or vertices are equipped with random weights. Both models have mainly been studied in the case when the graphs are of the form $\mathbb{Z}^d$, see e.g. \cite{DNV22,ADH17} and references therein. First passage percolation has recently been investigated for Barak-Erd\H{o}s graphs in the sparse inhomogeneous setting \cite{MT22}.

We take interest in a generalization of Barak-Erd\H{o}s graphs, considering the last passage percolation problem on a complete directed acyclic graph, in which each edge has a signed charge distributed according to an independent copy of the random variable  $w$  taking values in
 $\R \cup \{-\infty\}$ with a support bounded from above. We denote by $F$ the law of $w$, and let $\{w_{i,j}, 0 \leq i < j\}$, be a collection of i.i.d.\ copies of $w$. If $\pi$ is a path from $i$ to $j$, namely an increasing collection of vertices $(i=i_0, i_1, \ldots, i_\ell=j)$ then its charge is defined as the sum of the charges of its edges: $w(\pi) = w_{i_0,i_1} + \cdots + w_{i_{\ell-1}, i_\ell}$, using the convention that $-\infty + x = -\infty$ for all $x \in \R$. In other words, if a path goes through an edge with charge $-\infty$, then the charge of the path is $-\infty$. We define by convention the charge of a path consisting of a single vertex as $0$.

We are concerned with the quantity
\begin{equation}
  \label{eqn:lppWeight}
  W_n := \sup\{w(\pi):\, \pi \text{ is a path from $0$ to $n$}\},
\end{equation}
the maximum charge of all paths between $0$ and $n$.
Observe that $(W_n,n\geq 0)$ is a superadditive random sequence as direct computations show that for all $n,m \geq 0$,
\[
  W_{n+m} \geq W_{n} + W_{n,n+m},
\]
with $W_{n,n+m} = \sup\{w(\pi):\, \pi \text{ is a path from $n$ to $n+m$}\}$. As $W_{n,n+m}$ is a copy of $W_m$ independent of $W_n$, by Kingman's subadditive ergodic theorem  \cite{Kingman}
we have
\begin{equation}
  \label{eqn:lppConstant}
  \frac{W_n}{n} \to C(F), \quad \text{a.s. as $n\to\infty$}
\end{equation}
where $C(F)$ is a deterministic function of the law $F$. We refer to $C(F)$ as the \emph{last passage percolation constant} of $F$. The objective of the present article is to develop an approach to compute this constant through 
Monte Carlo
methods.

We denote by 
\[
L = \inf\{z \in \R : \P(w > z)  = 0\}
\] 
the \emph{essential supremum} of $F$ (this is the maximal point of the support of the distribution $F$).

Consider momentarily the case $L \leq 0$.
Then edge weights are nonpositive a.s., and, since 
\[
- W_n =- \inf\{ - w(\pi), \pi \text{ is a path from $0$ to $n$}\},
\]
the problem is that of first passage percolation on the complete directed graph.
We claim that $C(F)=0$. Indeed, with $n \ge 2$, considering the 2-edge path $(0,j,n)$,
\[
  0 \geq W_n \ge \sup_{1 \le j \le n-1} (w_{0,j} + w_{j,n})\quad \text{a.s.},
\]
hence
\begin{align*}
\P(W_n \ge 2(L-1)) 
&\geq \P\left( w_{0,j} + w_{j,n} \ge  2 (L-1) \text{ for all } 1 \le j \le n-1\right)\\
  &\geq 1 - (1 - \P(w > L-1)^2)^{n-1}.
\end{align*}
Thus, by the Borel-Cantelli lemma,
$W_n/n \to 0$ in probability, and hence $C(F)=0$, as claimed. 
It is not hard to see that $W_n$ itself  converges weakly to the 
random variable $\max(w,2L)$.

We only consider the case $L>0$ in the rest of the article. In this situation, up to replacing $w$ by $w/L$, we will assume without loss of generality that the essential supremum of $F$ is $1$. Hence, we work under the following assumption for the distribution $F$ of $w$:
\begin{equation}
  \label{eqn:FAssumption}
  \forall \epsilon > 0, \quad F([1-\epsilon,1])>0 \text{ and } F((1,\infty)) = 0.
\end{equation}

The case $F=p\delta_1+(1-p)\delta_{-\infty}$ formally corresponds to a Barak-Erd\H{o}s graph as any edge with charge $-\infty$ can be ignored. Such a graph was studied in \cite{FK03} and a more general version of it in \cite{DFK12}. The constant $C(p)$ mentioned earlier is, with an abuse of notation, the constant $C(p\delta_1+(1-p)\delta_{-\infty})$. We know that $C(p)>0$ for all $p>0$  which means that eventually, any two vertices that are far apart enough are connected
by a path that has charge $>-\infty$. It was shown in \cite{MR16, MR19} that the function $p\to C(p)$ is analytic on $(0,1]$ and a two-term asymptotic expansion was given in the limit $p\to0$ as well as the power series expansion around $p=1$.

The case $F = p \delta_1 + (1-p) \delta_x$, where $x \in(-\infty,1)$, was studied in \cite{FKP20}. For this case, the quantity $C(F)$ was denoted by $C_p(x)$, a differentiable function of $x \in (-\infty,1)\setminus I$, where $I$ is the union of nonpositive rationals and of the inverses $1/n$, $n \geq 2$. Moreover $C(p)$ is the decreasing limit of $C_p(x)$, as $x \to -\infty$. In the special case when $x=0$, it was shown in \cite{Dutta} that $C_p(0)=1/\psi(1-p)$ where $\psi$ is a Ramanujan theta function.

Let $F$ be a distribution on $[-\infty,1]$ with essential supremum $1$. Comparing $F$ with the distribution $p\delta_{1/2}+(1-p)\delta_{-\infty}$ where $p=F([\tfrac{1}{2},1])$, it is not hard to see that $C(F)>0$. The goal of this paper is to \emph{construct} a random variable with expectation $C(F)$ that can be perfectly simulated via an explicit algorithm. Perfect simulation of a functional of a Markov chain in its ``steady-state'' is a technique that, whenever applicable, avoids the bias introduced by  standard MCMC (=Markov Chain Monte Carlo) methods, in which one would approach $C(F)$ by a realization of $W_n/n$ for $n$ large enough. The terminology and algorithm were introduced in \cite{PROWIL96}.

A survey can be found in \cite{KEN05}. Its relation to the so-called backwards-coupling was studied in \cite{FOSTWE98}. It belongs to the broader area of coupling methods for stochastic recursions that may entirely lack the Markovian property \cite{BORFOS92, COMFERFER02,FK03}.

Our perfect simulation algorithm is based on the construction of a particle system, that we call the {\em Max Growth System} (MGS)
associated to the charged complete directed graph. This particle system can be seen as an extension of the Infinite Bin Model (IBM) \cite{FK03,MR16, CR17, MR19} arising in connection to the Barak-Erd\H{o}s graph. We mention {\em en passant} that the IBM is a particle system in discrete time introduced in \cite{FK03} but one which falls in a natural class of similar particle systems, manifestations of which have appeared frequently in the literature, e.g.\ in \cite{AP83}.

In Section~\ref{sec:maxgrowth}, we first define the MGS with charge distribution $F$ and describe some properties of its dynamics. In Section~\ref{sec:stc}, we show that a certain functional of the MGS is a Markov chain that admits a stationary version. In Section~\ref{sec:perfectsimulation}, we pull the random variable mentioned above from the stationary version, show that its expectation is $C(F)$ and describe a perfect simulation algorithm. We conclude by suggesting further directions of research in Section~\ref{sec:further}.

\section{The Max Growth System (MGS)}
\label{sec:maxgrowth}

The Max Growth System is a particle system on $\R\cup\{-\infty\}$ in which at every step a new atom is added to the process. This auxiliary particle system is constructed in such a way that starting from a single particle at position $0$, the $n$th particle in the system will be placed at position $W_n$. The precise connection between the Max Growth System and the last passage percolation model introduced in the previous section is given in Lemma~\ref{lem:MGSLPP}.

\subsection{Deterministic dynamics of the MGS}

We let $\NN$ be the set of locally finite point measures on $\R\cup\{-\infty\}$ with a finite maximal element, namely, measures whose values are nonnegative integers and which are finite on every interval of the form $[x,\infty)$. Another way to define $\mathcal{N}$ is as the set of Radon measures $\nu$ on $\R\cup\{-\infty\}$ such that $x \mapsto \nu([x,\infty))$ is a non-increasing function from $\R$ to $\Z_+$. This will be the state space on which the MGS is defined. Any such measure $\nu\in\NN$ is specified by the nonincreasing sequence $\nu_1 \ge \nu_2 \ge \cdots \geq -\infty$ of the locations of the points (atoms) of $\nu$. This sequence may be finite or infinite. For example, $\nu=2\delta_0 + \delta_{-1.5} + 3 \delta_{-4}$ is equivalently represented by the finite sequence $(0,0, -1.5, -4,-4, -4)$. We shall therefore think of any $\nu \in \NN$ either as a point measure $\nu=\sum_{k\ge 1} \delta_{\nu_k}$  or as a sequence $(\nu_1, \nu_2, \ldots)$. Note that the zero measure $0$ is an element of $\NN$ and corresponds to an empty sequence of points. The total mass $\|\nu\| = \nu(\R\cup\{-\infty\})$ of $\nu$ is the  number of its points (counted with multiplicity). We let $\inf \nu := \nu_{\|\nu\|}$ be the location of the last point of $\nu$ if $\|\nu\|< \infty$. If $\|\nu\|=\infty$, we let $\inf \nu = -\infty$.

Let $w=(w_1, w_2,\ldots)$ be a  sequence of elements of $\R \cup\{-\infty\}$, such that $\sup_{k \ge 1} w_k\leq 1$. Let $\WW$ be the collection of such sequences. Given $\nu$ a non-zero element of $\NN$, define the quantity
\[
\fm(\nu, w) := \sup_{k \ge 1} (\nu_k+w_k), \quad \nu \neq 0, \quad w \in \WW.
\]
Here the supremum is taken either over all $k\ge 1$ if $\|\nu\|=\infty$ or over $1\leq k\leq \|\nu\|$ if $\|\nu\|$ is finite. Observe that $\fm(\delta_0,w) = w_1$ for all $w \in \WW$. 
The map responsible for the dynamics of the MGS is defined by
\[
  \Psi_w \nu := \nu + 
  \delta_{\fm (\nu,w)},
\]
that consists in adding at every step an atom in the process at a position given by $\fm(\nu,w)$.

We will later employ a  ``coupling from the past" technique. 
To this end, it is worth
describing the MGS starting from an arbitrary point in time. Let $(w(t), t \in \Z)$ be a sequence of elements of $\WW$, $\nu$ a point measure in $\NN$ and $T \in \Z$. The MGS starting from $\nu$ at time $T$ is the process $(\nu(t), t \geq T)$ defined recursively by
\[
  \nu(T) = \nu \quad \text{and}\quad \nu(t+1) = \Psi_{w(t+1)} \nu(t), \quad t \geq T.
\]
When $(w_j(t), j \in \Z^+, t \in \Z)$ is i.i.d. with law $F$, we say that $(\nu(t), t \geq 0)$ is an MGS with charge distribution $F$. To simplify notation, for all $s \leq t \in \Z$, we write
\[
  \Psi_{w}^{s,t} = \Psi_{w(t)} \circ \Psi_{w(t-1)} \circ \cdots \circ \Psi_{w(s)},
\]
in which case we have $\nu(t) = \Psi_w^{T+1,t} \nu$ for all $t > T$.

To consider stationary versions of the MGS, we will sometimes need to work with the particle system seen from the rightmost particle. We denote by $\NN_0$ the set of 
$\nu \in \NN$ with $\nu_1 = 0$. 
For $\nu \in \NN$, we define its shift $\sigma\nu$ seen from the front by
\[
  \int f(x) \, \mathrm{d} (\sigma \nu)(x) :=  \int f(x - \nu_1)\,  \mathrm{d} \nu,
\]
for all $\nu \in \NN$ and all positive bounded functions $f$.
Thus $\sigma : \NN \to \NN$ and can be thought of as:
``place the origin at the position of the rightmost atom''.
For example, $\sigma (\delta_a + \delta_b) = \delta_{0} + \delta_{-|a-b|}$. 
Observe that $\sigma$ is a projection of $\NN$ onto $\NN_0$, 
which is consistent with the definition of the MGS as
\[
  \sigma \Psi_w = \sigma \Psi_w \sigma, 
\]
for all sequences $w \in \WW$. It is also worth mentioning that, for all $\nu \in \NN$,  we have
\begin{equation}
  \label{eqn;remSigma}
  \fm(\sigma \nu, w)  = \fm(\nu,w) - \nu_1.
\end{equation}

\subsection{Decoupling properties of the MGS}

The following lemma shows that if there is a large enough gap in between the first and the second atom in the point measure $\nu$, and the sequence of charges satisfies a ``triangular'' property, then the positions of the new particles only depend on a finite number of charges.
\begin{lemma}[Decoupling property]
\label{lem:triangular}
Fix $\ell \in [0,1)$ and a positive integer $n$. Let $T \in \Z$ and $(w(T+t), t \geq 1)$ be a sequence in $\mathcal W$. Let $\nu$ be a point measure in $\NN_0$ such that $\nu_2 \leq -\ell$. We define the sequences
\[
  \nu(t) = \Psi_{w(t)} \nu(t-1) \quad \text{and} \quad \tilde{\nu}(t) = \Psi_{w(t)} \tilde{\nu}(t-1),
\quad t \ge T,
\]
with $\nu(T) = \nu$ and $\tilde{\nu}(T) = \delta_0$. For all $n \in \N$, if
\begin{equation}
  \label{eqn:triangularProperty}
  \bar{w}(T;t) := \max \{ w_1(T+t), \ldots, w_t(T+t)\} \geq 1 - \ell \quad \text{ for all $1 \leq t \leq n$,}
\end{equation}
then $\fm (\nu(T+n-1),w(T+n)) = \fm(\tilde{\nu}(T+n-1),w(T+n))$.
\end{lemma}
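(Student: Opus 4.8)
The plan is to run an induction on the time step, proving a statement slightly stronger than the decoupling identity. Write $\nu=\delta_0+\mu$, where $\mu:=\sum_{j\ge2}\delta_{\nu_j}$ collects the atoms of $\nu$ below the top one; by hypothesis every atom of $\mu$ lies in $[-\infty,-\ell]$, and $\tilde\nu(T)=\delta_0$, $\nu(T)=\tilde\nu(T)+\mu$. I will show by induction on $s\in\{0,1,\dots,n-1\}$ the statement $(\ast_s)$: $\nu(T+s)=\tilde\nu(T+s)+\mu$, and every atom of $\tilde\nu(T+s)$ lies in $[0,\infty)$. The identity asserted in the lemma is then obtained from $(\ast_{n-1})$ by one further application of the two ingredients below, invoking the charge condition $\bar w(T;n)\ge 1-\ell$.

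Two facts do the work. The first is monotonicity of $\fm$: if $\rho\le\rho'$ in $\NN$ then the $k$th largest atom of $\rho'$ dominates that of $\rho$ for every $k\le\|\rho\|$, so $\fm(\rho',w)\ge\fm(\rho,w)$; applied to $\tilde\nu(T+s-1)\le\tilde\nu(T+s-1)+\mu=\nu(T+s-1)$ this yields ``$\ge$'' in the decoupling identity for free, with no use of \eqref{eqn:triangularProperty}. The second, where the hypotheses enter, is the claim: if $\rho$ has exactly $t$ atoms, all in $[0,\infty)$, and $\bar w(T;t)\ge1-\ell$, then $\fm(\rho,w(T+t))=\fm(\rho+\mu,w(T+t))\ge1-\ell$. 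The lower bound is immediate, since $\fm(\rho,w(T+t))=\sup_{1\le k\le t}(\rho_k+w_k(T+t))\ge\sup_{1\le k\le t}w_k(T+t)=\bar w(T;t)$. For the equality the key observation is that the atoms of $\mu$, being $\le-\ell$, can never be paired with a charge (which is $\le1$) to produce a value exceeding $1-\ell$; more precisely $\rho$ and $\rho+\mu$ have the same restriction to $(-\ell,\infty)$, so if $m$ denotes their common number of atoms in that interval, the $k$th largest atom of $\rho+\mu$ equals that of $\rho$ for $k\le m$ and is $\le-\ell$ for $k>m$. Splitting the supremum defining $\fm(\rho+\mu,w(T+t))$ at $k=m$ then gives $\fm(\rho+\mu,w(T+t))\le\max\big(\fm(\rho,w(T+t)),\,1-\ell\big)=\fm(\rho,w(T+t))$, the last step by the lower bound just proved; monotonicity supplies the reverse inequality.

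Granting these, the induction is routine. For $s=0$, $(\ast_0)$ holds by construction. If $(\ast_{s-1})$ holds with $1\le s\le n-1$, then $\tilde\nu(T+s-1)$ has exactly $s$ atoms, all in $[0,\infty)$, and $\bar w(T;s)\ge1-\ell$, so the claim gives $\fm(\nu(T+s-1),w(T+s))=\fm(\tilde\nu(T+s-1),w(T+s))\ge1-\ell\ge0$; since each process adds a single atom at this common location, $\nu(T+s)=\tilde\nu(T+s)+\mu$, and the new atom being $\ge0$ keeps all atoms of $\tilde\nu(T+s)$ in $[0,\infty)$, i.e. $(\ast_s)$. Applying the claim once more with $\rho=\tilde\nu(T+n-1)$ and $t=n$ yields the lemma. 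The one genuinely delicate point is the ``$\le$'' half of the claim: because $\fm$ pairs the $k$th largest atom with $w_k$, inserting the low atoms of $\mu$ reshuffles which charge meets which atom, and one should not try to track the shifted indices of the high atoms individually. The device that avoids this is exactly the splitting at rank $m$ above, together with the invariant that $\tilde\nu(\cdot)$ never acquires an atom below $0$ — which is precisely why it is $(\ast_s)$, and not the decoupling identity in isolation, that must be carried through the induction.
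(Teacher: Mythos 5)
Your proof is correct and takes essentially the same route as the paper: an induction over time steps whose key point is that the triangular condition forces each new atom to be created in $[0,\infty)$, so the sub\,-\,$(-\ell)$ atoms of the initial configuration never influence where the new atoms land. Your invariant $\nu(T+s)=\tilde\nu(T+s)+\mu$ together with $\tilde\nu(T+s)$ supported on $[0,\infty)$ is an equivalent (arguably slightly cleaner) reformulation of the paper's invariant $\nu(n)_{|\R^+}=\tilde\nu(n)_{|\R^+}$ and $\nu(n)(\R^+)=n+1$, and the split of the supremum at rank $m$ plays exactly the role of the paper's split at $j=n+1$.
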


\begin{proof}
It suffices to prove this statement for $T=0$. 
We prove, by induction, that
\begin{equation}
  \label{eqn:recursionHyp}
\bar{w}(0;t) \ge 1-\ell \text{ for all } 1 \le t \le n ~\Rightarrow~
  {\nu}(n)_{|\R^+} =  \tilde{\nu}(n)_{|\R^+} ~\text{and} ~\nu(n)(\R^+) = n+1
\end{equation}
Assume first that $n =1$. In this case,
\[
  \fm(\nu,w(1)) = \max\big( w_1(1),\, \max_{j \geq 2}[\nu_j + w_j(1) ]\big).
\]
Since, by assumption, $\nu_j \le \nu_2 \le -\ell$ for all $j \ge 2$,
we have $\nu_j + w_j(n) \le -\ell+1$ for all $j \ge 2$.
To prove \eqref{eqn:recursionHyp} for $n=1$ we must assume that $w_1(1) \ge 1-\ell$.
But then $w_1(1) \ge \max_{j \geq 2}[\nu_j + w_j(1)]$ and so
\[
  \fm(\nu,w(1)) = w_1(1) = \fm(\tilde{\nu}(0),w(1)).
\]
Hence $\nu(1) = \nu + \delta_{w_1(1)}$ and, with $\tilde \nu(0)=\delta_0$,
$\tilde\nu(1) = \delta_0 + \delta_{w_1(1)}$.
Hence \eqref{eqn:recursionHyp} holds for $n = 1$.

Assume next that \eqref{eqn:recursionHyp} holds for some $n \ge 2$.
We prove that it also holds for $n+1$.
To do this, it suffices to assume that
$\nu(n)_{|\R^+} =  \tilde{\nu}(n)_{|\R^+}$, $\nu(n)(\R^+) = n+1$,
and $\overline w(0; t) \ge 1-\ell$ for all $1\le t \le n+1$.
In this case, we have
\[
  \fm(\nu(n),w(n+1)) = \max\left( \max_{j \leq n+1}[\nu_j(n) + w_j(n+1)],
\,  \max_{j \geq n+2} [\nu_j(n) + w_j(n+1)] \right).
\]
But, for all $j \le n+1$, $\nu_j(n) \ge 0$ and so
\[
\max_{j \leq n+1}[\nu_j(n) + w_j(n+1)] \ge 1-\ell.
\]
Taking into account the assumption 
$\nu_2 \le -\ell$, we have, for all $j \geq n+2$, $\nu_j(n) \leq -\ell$ and so
\[
  \max_{ j \leq n+1}[\nu_j(n) + w_j(n+1)] \geq 1 - \ell \geq 
\max_{ j \geq n+2}[\nu_j(n) + w_j(n+1)],
\]
which implies that 
\[
\fm(\nu(n),w(n+1)) = \fm(\tilde{\nu}(n),w(n+1))
= \max_{j \leq n+1}[\nu_j(n) + w_j(n+1)] \ge 1-\ell > 0.
\] 
The configuration $\nu(n+1)$ is thus obtained by adding a particle to $\nu(n)$ at a 
positive location. Since $\nu(n)=\tilde \nu(n)$ on $\R^+$ and since the particle
is added at the same location for both, we have $\nu(n+1)=\tilde \nu(n+1)$ on $\R^+$.
Clearly, $\nu(n+1)(\R^+)= \nu(n)(\R^+)+1 = n+2$,
so \eqref{eqn:recursionHyp} holds for $n+1$.
\end{proof}

The above lemma allows us to describe a set of conditions on the sequences $(w(t))$ so that the increments of $\nu$ and $\tilde{\nu}$ are algebraically independent of $\nu$ and $\tilde{\nu}$.
\begin{corollary}
\label{cor:good}
Let $T \in \Z$, $n \in \N$, $\ell\in[0,1)$ and $(w(T+t),0 \leq t \leq n)$ a sequence such that
\begin{equation}
  \label{eqn:wCond}
  w_1(T) \geq \ell \text{ and } \min\{\bar{w}(T;1), \bar{w}(T;2) , \ldots , \bar{w}(T;n)\} \geq 1-\ell.
\end{equation}
Let $\nu, \tilde{\nu}$ be two elements of $\NN_0$ and define the sequences
\[
  \nu(t) = \Psi_{w(t)} \nu(t-1) \text{ and } \tilde{\nu}(t) = \Psi_{w(t)} \tilde{\nu}(t-1), \quad t \ge T,
\]
with $\nu(T-1) = \nu$ and $\tilde{\nu}(T-1) = \tilde{\nu}$. Then $\fm(\sigma \nu(t-1),w(t)) = \fm(\sigma \tilde{\nu}(t-1),w(t))$ for all $T+1 \leq t \leq T+n$.
\end{corollary}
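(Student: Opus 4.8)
The plan is to reduce the statement to a few applications of Lemma~\ref{lem:triangular}, keeping $T$ arbitrary. First I would run the dynamics for one step. Since $\nu,\tilde\nu\in\NN_0$ both have their rightmost atom at $0$ and $w_1(T)\ge\ell$, the atom added at time $T$ lands at $\fm(\nu,w(T))$ (and at $\fm(\tilde\nu,w(T))$), a point of $[\ell,1]$: the lower bound follows by keeping the term $k=1$ in the supremum defining $\fm$, the upper bound because every atom of $\nu$ is $\le0$ and every charge is $\le1$. Shifting $\nu(T)$ so that this new rightmost atom sits at $0$ sends the former rightmost atom (at $0$) to a position $\le-\ell$, so $\mu:=\sigma\nu(T)$ lies in $\NN_0$ with $\mu_2\le-\ell$, and likewise $\tilde\mu:=\sigma\tilde\nu(T)\in\NN_0$ with $\tilde\mu_2\le-\ell$. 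This is the only place the hypothesis $w_1(T)\ge\ell$ is used.

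Next I would move to the chains seen from the front. Let $\hat\nu$, $\hat{\tilde\nu}$, and $\rho$ be the MGS started at time $T$ from $\mu$, $\tilde\mu$, and $\delta_0$ respectively, all driven by the same sequence $w$. From $\sigma\Psi_w=\sigma\Psi_w\sigma$ and the fact that $\sigma$ is a projection, a short induction gives $\sigma\hat\nu(t)=\sigma\nu(t)$ and $\sigma\hat{\tilde\nu}(t)=\sigma\tilde\nu(t)$ for all $t\ge T$, so the asserted equality amounts to
\[
  \fm\big(\sigma\hat\nu(T+m-1),\,w(T+m)\big)=\fm\big(\sigma\hat{\tilde\nu}(T+m-1),\,w(T+m)\big),\qquad 1\le m\le n.
\]
By \eqref{eqn;remSigma}, $\fm(\sigma\eta,w)=\fm(\eta,w)-\eta_1$ where $\eta_1$ denotes the rightmost atom of $\eta$, so it suffices to prove, for each such $m$, that (i) $\fm(\hat\nu(T+m-1),w(T+m))=\fm(\hat{\tilde\nu}(T+m-1),w(T+m))$ and (ii) the rightmost atoms of $\hat\nu(T+m-1)$ and $\hat{\tilde\nu}(T+m-1)$ coincide.

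For (i), both $\mu$ and $\delta_0$ lie in $\NN_0$ with second atom $\le-\ell$, and $\bar w(T;t)\ge1-\ell$ for $1\le t\le m$ by \eqref{eqn:wCond}; Lemma~\ref{lem:triangular} applied over the first $m$ steps (to $\mu$, and to $\delta_0$) gives $\fm(\hat\nu(T+m-1),w(T+m))=\fm(\rho(T+m-1),w(T+m))$, and the same with $\tilde\mu$ in place of $\mu$ yields (i). For (ii), the case $m=1$ is immediate (both rightmost atoms equal $0$). For $2\le m\le n$, I would probe the system with the degenerate charge vector $u:=(1,-\infty,-\infty,\dots)\in\WW$ inserted at time $T+m$, leaving the charges at times $T+1,\dots,T+m-1$ unchanged: the hypothesis of Lemma~\ref{lem:triangular} still holds over these $m$ steps, the chains $\hat\nu,\hat{\tilde\nu},\rho$ are unaffected up to time $T+m-1$, and $\fm(\eta,u)$ equals the rightmost atom of $\eta$ plus one for every nonzero $\eta\in\NN$. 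Two further applications of Lemma~\ref{lem:triangular} (to $\mu$ and to $\tilde\mu$, each against $\delta_0$) then force the rightmost atoms of $\hat\nu(T+m-1)$, $\hat{\tilde\nu}(T+m-1)$, and $\rho(T+m-1)$ to agree, which is (ii). Combining (i), (ii), and the reduction above finishes the argument.

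The only subtle point is (ii): Lemma~\ref{lem:triangular} equates a single value of $\fm$ rather than whole configurations, so the position of the rightmost atom is not handed to us and must be recovered by the probe $u=(1,-\infty,\dots)$, which is built precisely so that its value against a configuration reads off that position. Everything else is routine manipulation with $\sigma\Psi_w=\sigma\Psi_w\sigma$ and \eqref{eqn;remSigma}.
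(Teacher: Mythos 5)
Your argument is correct, and it follows the same route as the paper's: first run one step to produce a gap of size at least $\ell$ between the top two atoms (your $\ell \le \fm(\nu,w(T)) \le 1$ computation and the passage to $\mu = \sigma\nu(T)$ with $\mu_2 \le -\ell$ is exactly what the paper does), then compare both processes to the one started from $\delta_0$ via Lemma~\ref{lem:triangular}, using \eqref{eqn;remSigma} to translate between the shifted and unshifted $\fm$.

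The ``subtle point'' you flag --- that the stated conclusion of Lemma~\ref{lem:triangular} only equates one value of $\fm$ and does not, on its face, tell you the rightmost atoms coincide --- is a fair reading, and your probe $u=(1,-\infty,-\infty,\dots)$ is a valid and rather elegant device to extract the front position from $\fm$. However, the more economical observation is that the proof of Lemma~\ref{lem:triangular} actually establishes the stronger induction statement \eqref{eqn:recursionHyp}, namely ${\nu}(n)_{|\R^+} = \tilde{\nu}(n)_{|\R^+}$ together with $\nu(n)(\R^+) = n+1$. Since every newly added atom lands in $\R^+$, this immediately gives $\nu_1(T+m-1)=\tilde\nu_1(T+m-1)$ for every $m$, which is your item (ii), with no extra applications of the lemma and no probe. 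The paper's one-line ``hence by \eqref{eqn;remSigma} we can apply Lemma~\ref{lem:triangular}'' is implicitly invoking this stronger fact; your proof makes the missing step explicit but at the cost of a detour. Either way, the statement is proved. One tiny remark: in ``both $\mu$ and $\delta_0$ lie in $\NN_0$ with second atom $\le-\ell$,'' the hypothesis on the second atom is only needed for $\mu$ (and $\tilde\mu$); the comparison measure $\delta_0$ plays the role of the fixed initial state $\tilde\nu(T)=\delta_0$ in Lemma~\ref{lem:triangular} and needs no such condition.
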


In other words, the sequence 
$\left(\fm(\sigma \nu(t-1),w(t)),~ T+1 \leq t \leq T+n\right)$ is algebraically independent of $\nu(T-1)$ provided that $w$ satisfies \eqref{eqn:wCond}.

\begin{proof}
We observe that as $w_1(T) \geq \ell$, we have
\[
  \fm(\nu(T-1),w(T)) \geq \nu_1(T-1) + w_1(T) \geq \ell \quad \text{and} \quad \fm(\tilde{\nu}(T-1),w(T)) \geq \ell.
\]
Therefore, the second largest atoms of $\sigma \Psi_{w(T)} \nu(T-1)$ and $\sigma \Psi_{w(T)} \tilde{\nu}(T-1)$ are both smaller than $-\ell$, hence by \eqref{eqn;remSigma} we can apply Lemma~\ref{lem:triangular}, which completes the proof.
\end{proof}

\subsection{The MGS derived from the charged complete directed graph}
\label{subsec:MGSLPP}

Consider the charged complete directed graph with i.i.d.\ edge charges $\{w_{i,j}, 0 \leq i < j\}$ of law $F$, a collection of i.i.d.\ random variables in $\R \cup \{-\infty\}$ with common law $F$ 
satisfying assumption \eqref{eqn:FAssumption}.
For all $n \in \N$, we write $W_n$ for the length of the longest path
between $0$ and $n$. We observe that $(W_n,n\geq 1)$ can be coupled with the MGS with charge distribution $F$.

\begin{lemma}
\label{lem:MGSLPP}
Let $(\nu(t), t \geq0)$ be an MGS with charge distribution $F$ such that $\nu(0)=\delta_0$ and let $(W_t, t \geq 0)$ as defined in \eqref{eqn:lppWeight}. We have the following equality in distribution:
\begin{equation}
  \label{eqn:aim}
  (\nu(t), t \geq 0) \egd \left(\sum_{j=0}^t \delta_{W_j}, t \geq 0\right).
\end{equation}
\end{lemma}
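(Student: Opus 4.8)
The plan is to identify both processes appearing in \eqref{eqn:aim} as Markov chains on $\NN$ with the \emph{same} transition kernel and the same deterministic starting configuration $\delta_0$; the equality in distribution then follows at once.

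\textbf{Step 1 (the kernel).} I would first introduce the Markov kernel $K$ on $\NN$ that sends a configuration $\nu$ to $\nu + \delta_X$, where $X$ is distributed as $\sup_{k\ge1}(\nu_k + w_k)$ for $(w_k)_{k\ge1}$ i.i.d.\ with law $F$ (the supremum being over $1\le k\le\|\nu\|$ when $\|\nu\|<\infty$). For an MGS $(\nu(t))_{t\ge0}$ with charge distribution $F$ and $\nu(0)=\delta_0$, the recursion $\nu(t+1) = \nu(t) + \delta_{\fm(\nu(t),w(t+1))}$, together with the fact that $w(t+1)$ is i.i.d.-$F$ and independent of $w(1),\dots,w(t)$ (hence of $\nu(0),\dots,\nu(t)$), shows directly --- since $\fm(\nu(t),w(t+1)) = \sup_k(\nu(t)_k + w_k(t+1))$ --- that $(\nu(t))_{t\ge0}$ is a Markov chain started at $\delta_0$ with kernel $K$.

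\textbf{Step 2 (the last-passage process).} Writing $V_t := \sum_{j=0}^t \delta_{W_j}\in\NN$, so that $V_0=\delta_{W_0}=\delta_0$, I would record the elementary recursion obtained by splitting a path from $0$ to $n\ge1$ at its penultimate vertex (and using $W_0=0$):
\[
  W_n = \max_{0\le j\le n-1}\big(W_j + w_{j,n}\big).
\]
Let $\mathcal G_t := \sigma(w_{i,j}: 0\le i<j\le t)$, so that $W_0,\dots,W_t$ and hence $V_t$ are $\mathcal G_t$-measurable, while $(w_{0,t+1},\dots,w_{t,t+1})$ is an i.i.d.-$F$ vector independent of $\mathcal G_t$. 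Conditionally on $\mathcal G_t$, the value $W_{t+1}$ is then distributed as the maximum of $W_j + w'_j$ over $0\le j\le t$, where $w'_0,\dots,w'_t$ are fresh i.i.d.-$F$ charges attached to the now-frozen numbers $W_0,\dots,W_t$. The key observation is that, the charges being exchangeable, this law depends on $W_0,\dots,W_t$ only through their multiset --- that is, only through $V_t$ --- and it coincides with the law of $\sup_k((V_t)_k+w_k)$ used to define $K$: if $\rho$ is a bijection with $(V_t)_k=W_{\rho(k)}$ then $\sup_k((V_t)_k+w_k)=\max_j(W_j+w_{\rho^{-1}(j)})$, and $(w_{\rho^{-1}(j)})_j$ is again i.i.d.-$F$. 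Since $\sigma(V_0,\dots,V_t)\subseteq\mathcal G_t$, this yields that $(V_t)_{t\ge0}$ is a Markov chain started at $\delta_0$ with kernel $K$.

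\textbf{Step 3 and main difficulty.} Steps 1 and 2 together give $(\nu(t),t\ge0)\egd(V_t,t\ge0)$, which is \eqref{eqn:aim}. The only point that genuinely requires care is the exchangeability argument in Step~2: the MGS dynamics is written in terms of the \emph{sorted} sequence of atoms, whereas in the last-passage recursion the $j$th atom $W_j$ is paired with the charge $w_{j,t+1}$ of the \emph{specific} edge $(j,t+1)$, so one must check that re-sorting an i.i.d.\ family leaves its distribution unchanged and that the resulting conditional law of $W_{t+1}$ factors through $V_t$. (If one prefers an explicit pathwise coupling over this distributional argument, one may instead relabel at each step $n$ the charges $(w_{j,n})_{0\le j<n}$ by the $\mathcal G_{n-1}$-measurable --- hence independent of $(w_{j,n})_j$ --- permutation sorting $W_0,\dots,W_{n-1}$ in non-increasing order, obtaining an i.i.d.-$F$ array whose MGS started from $\delta_0$ equals $(V_t)_{t\ge0}$ pointwise.)
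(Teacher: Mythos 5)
Your proof is correct and uses the same key ingredients as the paper's --- the last-step decomposition $W_{t+1}=\max_{0\le j\le t}(W_j+w_{j,t+1})$ and the exchangeability of the i.i.d.\ charges, which makes the conditional law of $W_{t+1}$ depend on $W_0,\dots,W_t$ only through their multiset --- but frames the conclusion as ``both processes are Markov chains on $\NN$ with the same kernel $K$ and starting state $\delta_0$'' rather than building the coupling inductively as the paper does. The explicit pathwise coupling (which you sketch in your closing parenthetical) is the form actually reused in the paper to prove Corollary~\ref{cor:Id}, so that remark is worth promoting rather than leaving as an aside.
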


\begin{proof}
By definition, we have $\nu(0) = \delta_0 = \delta_{W_0}$, using that the path of length $0$ between $1$ and $1$ has mass $0$. Let $t_0 \in \Z^+$ and assume that we can construct a coupling between $\nu$ and $W$ such that $(\nu(t), t \leq t_0) = (\sum_{j=0}^t \delta_{W_j}, t \leq t_0)$ a.s. Conditionally on this coupling, let $(w_{j,t_0+1}, j \in \Z^+)$ and $(w_j(t_0+1), j \in \Z^+)$ be independent families of i.i.d. random variables with law $F$. By \eqref{eqn:lppWeight}, decomposing all paths $\pi$ ending at $t_0+1$ according to their last step, we have
\begin{multline*}
  W_{t_0+1} = \max\{ W_j + w_{j,t_0+1}, 0 \leq j \leq t_0 \}\\
   \egd \max \{ \nu_{j}(t_0) + w_{j}(t_0+1), 1 \leq j \leq t_0+1\} = \fm(\nu(t_0),w(t_0+1)),
\end{multline*}
therefore $\nu(t_0+1) \egd \sum_{j=0}^{t_0+1} \delta_{W_j}$. As a result, we can couple the two sequences of random variables in such a way that the above equality holds almost surely.

Hence, by recursion, there exists a coupling between the MGS and the last passage percolation problem such that \eqref{eqn:aim} holds for all times.
\end{proof}

A noteworthy observation is that the increments of $W$ are the same as the relative increments of the MGS. More precisely, defining
\begin{equation}
  \label{lppMaxWeight}
  M_n = \max_{0 \leq k \leq n} W_k = \sup_{0 \leq k \leq n}\left\{ w(\pi), \pi \text{ path from $0$ to $k$} \right\},
\end{equation}
the increments of the sequence $(M_n, n \geq 0)$ can be connected to the relative increments of the MGS.
\begin{corollary}
\label{cor:Id} 
Under the foregoing assumptions,
\[
(M_n-M_{n-1} ,  n \ge 1) \egd (\fm(\sigma \nu(n-1), w(n))^+, n \ge 1).
\]
\end{corollary}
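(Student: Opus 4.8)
The plan is to obtain Corollary~\ref{cor:Id} directly from Lemma~\ref{lem:MGSLPP} and the shift identity \eqref{eqn;remSigma}, by exhibiting both sides of the asserted equality as the image, under one and the same deterministic measurable map, of the two processes that Lemma~\ref{lem:MGSLPP} declares equal in distribution. Concretely, I would introduce a map $\Phi$ acting on sequences $(\mu(t), t \geq 0)$ of point measures in $\NN$ with $\mu(0)=\delta_0$ and such that, for every $n \geq 1$, the increment $\mu(n)-\mu(n-1)$ is a single unit mass $\delta_{a_n}$ (with $a_n \in \R\cup\{-\infty\}$), by setting $\Phi\big((\mu(t))_t\big) := \big((a_n - \mu(n-1)_1)^+,\, n \geq 1\big)$, where $\mu(n-1)_1$ is the rightmost atom of $\mu(n-1)$ and $(-\infty)^+:=0$. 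This $\Phi$ is well-defined and measurable on the relevant measurable subset of $\NN^{\Z^+}$; the one point to verify with care is that $a_n$ is genuinely a function of the pair $(\mu(n-1),\mu(n))$ alone, with no reference to the underlying charges — which holds precisely because $\mu(n)-\mu(n-1)$ is, in both cases of interest below, a single Dirac mass.

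Next I would evaluate $\Phi$ on each process. For the MGS $(\nu(t), t\geq 0)$ with $\nu(0)=\delta_0$, the dynamics give $\nu(n)-\nu(n-1) = \delta_{\fm(\nu(n-1),w(n))}$, so here $a_n = \fm(\nu(n-1),w(n))$, and then \eqref{eqn;remSigma} yields $a_n - \nu(n-1)_1 = \fm(\nu(n-1),w(n)) - \nu(n-1)_1 = \fm(\sigma\nu(n-1),w(n))$; hence $\Phi\big((\nu(t))_t\big) = (\fm(\sigma\nu(n-1),w(n))^+, n\geq 1)$, the right-hand side of the claim. For the process $\mu(t) := \sum_{j=0}^t \delta_{W_j}$ one has $\mu(n)-\mu(n-1) = \delta_{W_n}$, so $a_n = W_n$, while $\mu(n-1)_1 = \max_{0 \leq j \leq n-1} W_j = M_{n-1}$ (using $W_0 = 0$); therefore $\Phi\big((\mu(t))_t\big) = \big((W_n-M_{n-1})^+, n\geq 1\big) = (M_n - M_{n-1}, n \geq 1)$, since $(W_n-M_{n-1})^+ = \max(W_n,M_{n-1}) - M_{n-1}$. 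Finally, Lemma~\ref{lem:MGSLPP} gives $(\nu(t), t\geq 0) \egd (\sum_{j=0}^t \delta_{W_j}, t\geq 0)$ as processes, and applying the measurable map $\Phi$ to both sides preserves equality in distribution, which is exactly the corollary.

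I do not anticipate a real obstacle: the content is bookkeeping. The two points needing a little care are (i) the $-\infty$ conventions — checking that the positive-part truncation built into $\Phi$ is consistent on both sides when $W_n=-\infty$ (equivalently $M_n = M_{n-1}$) or when $\fm(\sigma\nu(n-1),w(n)) = -\infty$ — and (ii) phrasing the measurability of $\Phi$ precisely, e.g.\ by noting that on its domain the atom location $a_n$ depends measurably on $(\mu(n-1),\mu(n))$. Alternatively, one can bypass $\Phi$ entirely and argue pathwise on the almost-sure coupling constructed in the proof of Lemma~\ref{lem:MGSLPP}, under which $\nu(n-1)_1 = M_{n-1}$ and $\fm(\nu(n-1),w(n)) = W_n$ hold identically; then \eqref{eqn;remSigma} gives $\fm(\sigma\nu(n-1),w(n)) = W_n - M_{n-1}$ and taking positive parts gives $M_n - M_{n-1}$ term by term, again yielding the claim.
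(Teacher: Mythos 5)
Your proof is correct and takes essentially the same route as the paper: the primary presentation via the deterministic map $\Phi$ is just a clean packaging of the coupling argument — the actual computations (extracting the newly added atom's location, subtracting the rightmost atom, invoking \eqref{eqn;remSigma}, taking positive parts, and using $M_n=\max(W_n,M_{n-1})$) are exactly the paper's. The ``alternative'' pathwise argument you sketch at the end, using the almost-sure coupling from the proof of Lemma~\ref{lem:MGSLPP} under which $W_n=\fm(\nu(n-1),w(n))$ and $M_n=\nu_1(n)$, is verbatim the paper's proof.
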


\begin{proof}
In the proof above, we established a coupling between $(\nu(n), n \geq 0)$ and $(W_k, k \geq 0)$. Under this coupling, for $n \in\Z^+$, we have
\[
  W_n = \fm(\nu(n-1),w(n)) \quad \text{and} \quad M_n = \nu_1(n).
\]
As a result, under this coupling, we have
\[
  M_n - M_{n-1} = (W_n - M_{n-1})^+ = (\fm(\nu(n-1), w(n)) - \nu_1(n-1))^+ = \fm(\sigma \nu(n-1), w(n))^+. \qedhere
\]
\end{proof}

\section{Stationarity via coupling}\label{sec:stc}

We recall that our aim is to compute the quantity $C(F)$ defined by
\[
  C(F) := \lim_{n \to \infty} \frac{W_n}{n} \quad \text{a.s.}
\]
As $F$ has a finite essential supremum, it holds that $\int_0^\infty x F(\mathrm{d}x) < \infty$. Therefore, by \cite{FMS14},  it is known that
\[
  C(F) = \lim_{n \to \infty} \frac{M_n}{n} \quad \text{a.s. and in } L^1.
\]
Thus, if $F$ satisfies \eqref{eqn:FAssumption}, we have
\[
  C(F) = \lim_{n \to \infty} \frac{\E(M_n)}{n}.
\]

Using Corollary \ref{cor:Id}, we remark that for all $n \in \N$,
\[
  \frac{M_n}{n} = \frac{1}{n} \sum_{j=1}^n (M_j - M_{j-1}) = \frac{1}{n} \sum_{j=1}^n \fm(\sigma \nu(j-1),w(j))^+.
\]
We show in this section that $\left( \fm(\sigma \nu(n-1),w(n)), n \geq 1 \right)$ admits a stationary version (where we recall that $\sigma \nu$ is the point measure shifted so that its rightmost element is at position $0$). 
Since the process $\left( \fm(\sigma \nu(n-1),w(n)), n \geq 1 \right)$
is not Markovian, the term
``stationary version'' should be used with caution. For us, it means that it couples with
a stationary process in finite time,  as in the statement of Theorem \ref{thm1} below.
Then, letting $\bar{\fm}$ be the limit in distribution of $\fm(\sigma \nu(n-1),w(n))$ as $n \to \infty$, we have
\[
  C(F) = \lim_{n \to \infty} \frac{1}{n}\sum_{j=1}^n \E\left(\fm(\sigma \nu(j-1), w(j))^+\right) =  \E(\bar{\fm}^+),
\]
as $\fm(\sigma \nu, w)^+ \in [0,1]$ a.s. In the next section, we introduce the perfect simulation algorithm, which consists in giving a realization of $\bar{\fm}$ without constructing the limit in distribution of $\sigma \nu(n)$ as $n \to \infty$. In the special case of Barak-Erd\H{o}s graphs, a simpler case
of a perfect simulation algorithm was explained in \cite{FK03}.

\begin{theorem}
\label{thm1}
Suppose that $F$ is a distribution satisfying \eqref{eqn:FAssumption}. Let $\ell \in [0,1)$ be such that $p :=F([1-\ell,1]) \in (0,1]$. Let $w = (w_i, i \in \N)$ be i.i.d. random variables with law $F$ and $(w(t), t \in \Z)$ i.i.d. copies of $w$. Given $\nu(0) \in \NN$, we define the MGS by
\[
  \nu(t+1) = \Phi_{w(t+1)} \nu(t), \quad \text{ for all $t \geq 0$}.
\]
There exists a stationary process $(\bar{\fm}(t), t \in \Z)$ such that
\[
  \fm(\sigma\nu(t-1),w(t)) = \bar{\fm}(t) \quad \text{a.s. for $t$ large enough}.
\]
In particular $\E( \bar{\fm}(0)^+) = C(F)$.
\end{theorem}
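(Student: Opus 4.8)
The plan is to build the stationary process $(\bar\fm(t), t\in\Z)$ by a coupling-from-the-past argument, using the decoupling machinery of Lemma~\ref{lem:triangular} and Corollary~\ref{cor:good} to guarantee that the MGS started at time $-\infty$ forgets its initial condition after finitely many steps. Fix $\ell\in[0,1)$ with $p=F([1-\ell,1])\in(0,1]$. First I would isolate the relevant ``good event'' for a block of $n+1$ consecutive charge vectors ending at time $t$: by Corollary~\ref{cor:good}, if $w_1(T)\ge\ell$ and $\bar w(T;s)\ge 1-\ell$ for $1\le s\le n$, then the relative increments $\fm(\sigma\nu(s-1),w(s))$ for $T+1\le s\le T+n$ do not depend on $\nu(T-1)$. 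The key probabilistic observation is that each of these conditions has positive probability — $\P(w_1(T)\ge\ell)\ge F([\ell,1])>0$ and $\P(\bar w(T;s)\ge 1-\ell)\ge 1-(1-p)^s$ — and, crucially, the event $\bar w(T;s)\ge 1-\ell$ for \emph{all} $s\ge 1$ has probability $\prod_{s\ge1}(1-(1-p)^s)>0$ since $\sum_s (1-p)^s<\infty$. So a ``renovation'' of infinite length occurs with positive probability at any given time.

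Next I would set up the coupling-from-the-past. Let $\tau<0$ be a (random) time such that $w_1(\tau)\ge\ell$ and $\bar w(\tau;s)\ge 1-\ell$ for all $s\ge 1$; by stationarity and the Borel–Cantelli lemma applied to a sequence of disjoint windows (using independence across windows of a truncated version of the event, then taking limits), such $\tau$ exists almost surely, and we may take $\tau=\tau(t)$ to be the largest such time $\le t-1$, finite a.s. Define $\bar\fm(t):=\fm(\sigma\nu^{(\tau)}(t-1),w(t))$ where $\nu^{(\tau)}$ is the MGS started from $\delta_0$ at time $\tau$. By Corollary~\ref{cor:good} this value is the same whatever finite-time initial condition we use at time $\tau-1$, and in particular it agrees with $\fm(\sigma\nu(t-1),w(t))$ for the process started at time $0$ from any $\nu(0)\in\NN$, as soon as $t$ is large enough that $\tau(t)\ge 0$ — which happens a.s.\ for all large $t$ since the ``good'' times have positive density. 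The process $(\bar\fm(t))_{t\in\Z}$ is stationary because it is a fixed measurable factor of the stationary sequence $(w(t))_{t\in\Z}$: $\bar\fm(t)$ is built from $(w(s))_{s\le t}$ by the shift-equivariant recipe ``look back to the last infinite renovation time, run the MGS from $\delta_0$ there.''

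Then the identity $\E(\bar\fm(0)^+)=C(F)$ follows from what is already in the excerpt: by Corollary~\ref{cor:Id}, $M_n/n=\frac1n\sum_{j=1}^n\fm(\sigma\nu(j-1),w(j))^+$; since $\fm(\sigma\nu(j-1),w(j))^+\in[0,1]$ and $\fm(\sigma\nu(j-1),w(j))=\bar\fm(j)$ for all large $j$, the Cesàro averages of $\fm(\sigma\nu(j-1),w(j))^+$ and of $\bar\fm(j)^+$ have the same limit; by the ergodic theorem (or just stationarity plus bounded convergence, as in the displayed computation preceding the theorem) this limit is $\E(\bar\fm(0)^+)$, which by \cite{FMS14} equals $\lim_n \E(M_n)/n=C(F)$.

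I expect the main obstacle to be the almost-sure finiteness of the coupling time $\tau(t)$, i.e.\ showing that infinitely many infinite-length renovation events occur in the past. The event ``$\bar w(T;s)\ge 1-\ell$ for all $s\ge1$'' at a fixed $T$ is not independent across different values of $T$ (the windows $\{w_1(T+s),\dots,w_s(T+s)\}$ overlap in a complicated way), so a naive Borel–Cantelli does not immediately apply. The fix is to work with a truncated renovation event of a fixed large depth $N$ — ``$w_1(T)\ge\ell$ and $\bar w(T;s)\ge 1-\ell$ for $1\le s\le N$'' — choosing windows spaced at least $N$ apart so that these truncated events become independent with probability bounded below, apply Borel–Cantelli to get infinitely many of them a.s., and then argue that a.s.\ infinitely many of these extend to the full infinite renovation (a second Borel–Cantelli, or a direct $0$–$1$ argument using that the conditional probability of extension, given the depth-$N$ event, is bounded below uniformly by $\prod_{s>N}(1-(1-p)^s)\to1$). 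Making this robust — and checking that the resulting $\tau(t)$ is genuinely measurable and shift-covariant so that stationarity of $(\bar\fm(t))$ is clean — is the delicate part; everything else is bookkeeping built on Corollary~\ref{cor:good} and Corollary~\ref{cor:Id}.
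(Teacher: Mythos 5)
Your construction is essentially the same as the paper's: define the infinite-depth renovation event $R_k=\{w_1(k)\ge\ell\}\cap\bigcap_{j\ge1}\{\bar w(k;j)\ge 1-\ell\}$, note $\P(R_k)=F([\ell,1])\prod_{j\ge1}(1-(1-p)^j)>0$, use Corollary~\ref{cor:good} to show that after the latest renovation time the relative increments are insensitive to the initial configuration, take $\bar\fm(t)$ to be the corresponding increment of the MGS restarted from $\delta_0$ at the preceding renovation time, and conclude via Corollary~\ref{cor:Id} and \cite{FMS14}.

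The one place where you and the paper diverge is the point you yourself flag as delicate: showing that renovation times are a.s.\ unbounded in both directions. You propose a truncation-to-depth-$N$ argument with disjoint windows, Borel--Cantelli, and a further step to upgrade from finite-depth to infinite-depth renovations. This can be made to work, but it is more laborious than necessary and the ``second Borel--Cantelli'' you gesture at is not quite right (the extensions beyond depth $N$ at different windows are not independent). The paper instead observes that $(R_k)_{k\in\Z}$ is a stationary sequence of events of positive probability under the i.i.d.\ shift, so that the random set $J=\{k:R_k\}$ satisfies $\inf J=-\infty$ and $\sup J=+\infty$ a.s.\ directly by the Poincar\'e recurrence theorem (equivalently, by ergodicity of the i.i.d.\ shift, or by the observation that ``$R_k$ finitely often'' is a tail event of probability $<1$). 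This sidesteps the truncation entirely. The ``$0$--$1$ argument'' you mention as an alternative is in fact the right idea and is what Poincar\'e/Kolmogorov delivers cleanly.

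One small slip in your stationarity justification: $\bar\fm(t)$ is not a function of $(w(s))_{s\le t}$ alone, because the renovation event $R_\tau$ at the last good time $\tau\le t-1$ involves $w(s)$ for arbitrarily large $s>\tau$ (the triangular conditions extend to all depths). Stationarity still holds, but because $\bar\fm(t)$ is a shift-equivariant measurable functional of the entire two-sided i.i.d.\ sequence $(w(s))_{s\in\Z}$, not of the past alone. This is exactly how the paper's $\tilde\nu(t)=\sigma\sum_i\mathbbm{1}_{\{T_i<t\le T_{i+1}\}}\Psi_w^{T_i,t}\delta_0$ is set up. Apart from these two points, your argument matches the paper's.
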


\begin{proof}
For $T \in \Z$ and $t \in \N$, we recall the notation $\bar{w}(T;t) = \max \{ w_1(T+t),\ldots, w_t(T+t)\}$ from Lemma~\ref{lem:triangular}. We introduce the event
\[
  R_k := \bigcap\limits_{j=1}^\infty \{ w_1(k) \geq \ell,  \bar{w}(k;j) \geq 1 - \ell\}.
\]
It is clear from its definition that $(R_k,  k \in \Z)$ is 
a stationary sequence of events with
\[
  \P(R_k) = \P(R_0) = F([\ell,1])\prod_{j=1}^\infty (1 - (1 - p)^j) > 0.
\]
Consider the stationary random set $J := \{ k \in \Z : R_k \text{ holds}\}$. Since $\P(R_k)> 0$, we have, by ergodicity (more specifically by the Poincar\'e recurrence theorem), 
$\inf J = -\infty$ and $\sup J = \infty$ a.s. We enumerate the elements of $J$ by
\[
  \cdots < T_{-1} < T_0 \leq 0 < T_1 < T_2 < \cdots 
\]
We define
\[
  \tilde{\nu}(t) := \sigma \sum_{i \in \Z} \mathbbm{1}_{\{T_i < t \leq T_{i+1}\}} \Psi^{T_i,t}_{w} \delta_0, \quad t \in \Z.
\]
It is clear from its definition that $(\tilde{\nu}(t), t \in \Z)$ is stationary, as $(w(t), t \in \Z)$ is a stationary sequence, and $(\tilde{\nu}(T_i+1), i \in \Z)$ are i.i.d.\ elements of $\NN_0$. Next, we define
\[
  \bar{\fm}(t) = \fm(\tilde{\nu}(t-1),w(t)), \quad t \in \Z,
\]
which is again a stationary sequence.

By Corollary \ref{cor:good}, we observe that for all $t \geq T_1+1$, the quantity $\fm(\sigma \nu(t-1),w(t))$ does not algebraically depend on $\nu(T_1-1)$. Hence, we have $\fm(\sigma \nu(t-1),w(t)) = \bar{\fm}(t)$, using that $\bar{\fm}(t)$ is the same quantity for the MGS started from $\delta_0$ at time $T_1 -1$. As $T_1<\infty$ a.s.\ this completes the proof of the first part of the theorem.

Next, using that
\[
   C(F) = \lim_{n \to \infty} \frac{\E(M_n)}{n} = \lim_{n \to \infty} \frac{1}{n} \sum_{j=1}^n \E(\fm(\sigma \nu(j-1),w(j))^+) \quad \text{a.s.},
\]
and using the eventual equality between $\fm(\sigma \nu(t-1),w(t))$ and $\bar{\fm}(t)$, we have
\[
  C(F) = \lim_{n \to \infty} \frac{1}{n} \sum_{j=1}^n \E(\bar{\fm}(j)^+) = \E(\bar{\fm}(0)^+),
\]
by stationarity and ergodicity of the sequence.
\end{proof}

\begin{remark}
The random times $T_i$ split the process into independent and identically distributed pieces (thereby making the process strictly regenerative) yielding a number of limiting results including a (functional) central limit theorem. In terms of the last passage percolation model, the $T_i$ are the locations of points through which every longest path must pass. Thus the stationary last passage percolation model admits bi-infinite longest paths, and any longest path in a finite graph grown from a single initial vertex will eventually coalesce with some bi-infinite longest path.
\end{remark}

\section{Perfect simulation}
\label{sec:perfectsimulation}

The formula \eqref{eqn:lppConstant} for $C(F)$ suggests a straightforward method for estimating $C(F)$: starting from $\nu(0)=0$, generate iteratively 
$\nu(1), \nu(2), \ldots, \nu(n)$, and take $\nu_1(n)$ for an estimation of $C(F)$. This standard (so-called MCMC) method introduces a bias. Indeed, $\E(\nu_1(n))/n$ is not equal to $C(F)$, but merely converges to that constant.

To eliminate this bias, we produce an algorithm that constructs the variable $\bar{\fm}(0)$, whose distribution is unknown. Then, by standard  
Monte Carlo method, an unbiased estimation of $C(F)$ can be constructed. This is done in this case by using the construction described in the proof of Theorem~\ref{thm1}.

This algorithm is a development of a similar construction for functionals of stochastic recursions in \cite{FK03} and is based on the ideas of so-called ``backward coupling'', see \cite{FOSTWE98}. It is close in spirit to the coupling-from-the-past method for Markov chains \cite{PROWIL96} and to the perfect simulation construction for processes with ``long memory'' \cite{COMFERFER02}. Note that the algorithm from \cite{PROWIL96} is applicable to either finite Markov chains or ordered monotone Markov chains possessing a unique minimal state and a unique maximal state, so 
it cannot be applied to our case.

\begin{theorem}[Perfect simulation]
Define
\[
  T^* := \sup\{ t \le -1:\, {w}_1(t) \geq \ell, \min_{1\leq j \leq |t|} \overline w(t;j) \geq 1-\ell\}.
\]
Then $|T^*|< \infty$ a.s., and
\[
  \bar{\fm}(0) = \fm\left( \sigma \Psi^{T^*,-1}_w \delta_0;w(0) \right) \text{ a.s.}
\]
\end{theorem}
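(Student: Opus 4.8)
The plan is to translate the "backward" definition of $T^*$ into the "forward" regeneration-set language of the proof of Theorem~\ref{thm1}. Recall that there we defined the stationary random set $J = \{k \in \Z : R_k \text{ holds}\}$, where $R_k$ requires $w_1(k) \geq \ell$ and $\bar w(k;j) \geq 1-\ell$ for \emph{all} $j \geq 1$, and we enumerated $\cdots < T_{-1} < T_0 \leq 0 < T_1 < \cdots$. The object $T^*$ in the statement uses a \emph{truncated} condition: at a candidate time $t \leq -1$ it only asks $\bar w(t;j) \geq 1-\ell$ for $1 \leq j \leq |t|$, i.e.\ only for those indices that are needed to control the dynamics up to time $0$. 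So the first step is to observe that, for the purpose of computing $\fm(\sigma\nu(-1),w(0))$ started from $\delta_0$ at some time $T \leq -1$, Corollary~\ref{cor:good} (applied with the window $[T+1,0]$, so $n = |T|$) only ever uses $\bar w(T;1),\dots,\bar w(T;|T|)$ together with $w_1(T)\ge\ell$; the tail of the $R_T$-condition is irrelevant over a finite horizon ending at $0$.

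Concretely, I would argue as follows. \textbf{Finiteness of $|T^*|$.} For each fixed $t \leq -1$, the event $E_t := \{w_1(t) \geq \ell,\ \min_{1 \leq j \leq |t|}\bar w(t;j) \geq 1-\ell\}$ satisfies $E_t \supseteq R_t$, hence $\P(E_t) \geq \P(R_0) > 0$; moreover the events $E_t$ for distinct $t$ depend on disjoint blocks of the i.i.d.\ array $(w_j(t))$ (the event $E_t$ only involves the sequence $w(t)$ indexed by the single time coordinate $t$), so they are independent. Since $\sum_{t \leq -1}\P(E_t) = \infty$, Borel--Cantelli (second, independent form) gives $E_t$ infinitely often as $t \to -\infty$, so the sup defining $T^*$ is over a non-empty set and $T^* > -\infty$, i.e.\ $|T^*| < \infty$ a.s.

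\textbf{Identification of $\bar{\fm}(0)$.} Let $T_1$ be the first strictly positive element of $J$ shifted appropriately — more precisely, in the proof of Theorem~\ref{thm1} what mattered is that there exists some $k \in J$ with $k \leq -1$ (here we use $\inf J = -\infty$), because then the MGS started from $\delta_0$ at time $k-1$ produces, at every time $t \geq k+1 > k$, exactly the stationary value: $\fm(\sigma\nu(t-1),w(t)) = \bar{\fm}(t)$ for $t \geq k+1$, in particular at $t = 0$ provided $k \leq -1$. Now let $k_0 := \max\{k \in J : k \leq -1\}$, the largest "full regeneration" time before $0$. On one hand $\bar{\fm}(0) = \fm(\sigma\Psi^{k_0,-1}_w\delta_0; w(0))$ a.s., by the displayed equality from Theorem~\ref{thm1}'s proof together with the semigroup identity $\nu(-1) = \Psi^{k_0,-1}_w\delta_0$ when $\nu(k_0-1)=\delta_0$. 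On the other hand $k_0 \in E_{k_0}$ trivially (since $R_{k_0} \subseteq E_{k_0}$), so $k_0 \leq T^*$; and conversely any $t$ with $E_t$ holding satisfies, by Corollary~\ref{cor:good} applied on the window $[t+1,0]$, that $\fm(\sigma\Psi^{t,-1}_w\delta_0;w(0))$ does not depend on the configuration at time $t-1$, hence equals $\fm(\sigma\Psi^{k_0,-1}_w\delta_0;w(0)) = \bar{\fm}(0)$ — in particular this holds at $t = T^*$. This gives $\bar{\fm}(0) = \fm(\sigma\Psi^{T^*,-1}_w\delta_0;w(0))$ a.s.

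The step I expect to require the most care is the "coalescence is insensitive to the tail" point, i.e.\ checking that replacing the infinite intersection in $R_k$ by the finite one in the definition of $T^*$ genuinely does not change the computed value of $\bar{\fm}(0)$: one must verify that Corollary~\ref{cor:good} with parameter $n = |t|$ really does certify algebraic independence of $\fm(\sigma\nu(s-1),w(s))$ from $\nu(t-1)$ for \emph{all} $t+1 \leq s \leq 0$, and then chain this from the candidate time $t = T^*$ up through the genuine regeneration time $k_0 \leq T^*$ (where the two MGS's, started from $\delta_0$ at $k_0-1$ and from an arbitrary configuration at $T^*-1$, agree from time $T^*+1$ onward on $\R^+$, in particular at time $0$). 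The independence structure needed for the second Borel--Cantelli lemma — that $E_t$ depends only on the $t$-th time slice $(w_j(t))_{j\ge1}$ and that $|t|$ indices suffice — should be spelled out explicitly since it is exactly what makes the truncation both valid and finite.
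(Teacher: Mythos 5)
Your identification step is essentially the paper's argument: apply Corollary~\ref{cor:good} with window $[T^*+1,0]$ (so $n=|T^*|$) to conclude that $\fm(\sigma\Psi^{T^*,-1}_w\nu;w(0))$ is insensitive to the choice of $\nu\in\NN_0$, and then substitute the stationary configuration to identify the value with $\bar\fm(0)$. You chain explicitly through the genuine regeneration time $k_0=\max\{k\in J: k\le -1\}$, whereas the paper substitutes $\nu=\sigma\tilde\nu(T^*-1)$ directly, but this is a cosmetic difference.

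Your finiteness argument, however, contains a genuine error. You claim that the events $E_t$ are independent because ``the event $E_t$ only involves the sequence $w(t)$ indexed by the single time coordinate $t$.'' This is false. By definition $\bar w(t;j)=\max\{w_1(t+j),\dots,w_j(t+j)\}$ involves the charges at time $t+j$, not at time $t$, so
\[
E_t \text{ is a function of } \{w_1(t)\}\cup\{w_j(t+k): 1\le j\le k\le |t|\},
\]
a triangular array spanning the time slices $t, t+1,\dots,0$ (this is exactly the triangular array drawn in Figure~\ref{fig:triangulararray}). For $t\ne t'$ the events $E_t$ and $E_{t'}$ share all time slices $s$ with $\max(t,t')\le s\le 0$, so they are far from independent and the second Borel--Cantelli lemma does not apply as invoked. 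The same false claim is repeated in your final paragraph. The correct (and much shorter) route, which is the paper's, is simply monotonicity: $R_{T_{-1}}\subseteq E_{T_{-1}}$, so $T^*\ge T_{-1}$, and $T_{-1}>-\infty$ a.s.\ was already established in the proof of Theorem~\ref{thm1} via $\inf J=-\infty$. Note that your own identification step already proves $k_0\le T^*$ with $k_0>-\infty$ a.s., so the Borel--Cantelli detour is both incorrect and unnecessary; delete it and quote the bound $T^*\ge T_{-1}$ instead.
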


\begin{proof}
We recall that $(T_{-j}, j \in \N)$ are the negative elements of the random set $J$, with $T_{-1} > -\infty$. We remark that
\[
  w_1(T_{-1}) \geq \ell, \quad \bar{w}(T_{-1};j) \geq 1 - \ell \text{ for all $j> 0$},
\]
therefore $T_* \geq T_{-1}$, proving its finiteness.

Moreover, since
\[
  w_1(T^*) \geq \ell, \min_{1 \leq j \leq |t|} \bar{w}(T^*;j) \geq 1-\ell,
\]
by Corollary \ref{cor:good}, the quantity $\fm\left( \sigma \Psi^{T^*,-1}_w \nu; w(0)\right)$ does not algebraically depend on the value of $\nu \in \NN_0$. As a result, it is equal to $\bar{\fm}(0)$, defined as $\fm\left( \sigma \Psi^{T^*,-1}_w \delta_0; w(0)\right)$.
\end{proof}

\begin{remark}
If the essential supremum $L$ of $F$ is infinite, then the perfect simulation algorithm we defined cannot apply. Indeed, in this situation, Lemma~\ref{lem:triangular} does not apply and we could not find an event depending on a finite number of charges such that an analogue of this lemma would hold. When $L=\infty$, even if the tail of $F$ decays fast enough, we would still need to look at infinitely many values of $w_j(1)$ to increment just the first time step of the MGS, making it impossible to hope for a perfect simulation algorithm which ends in finite time for any starting configuration.
\end{remark}

\begin{center}
\begin{algorithm}[ht]
  \caption{Construction of a variable of law $\bar{\fm}(0)$.}
  \label{algo:main}
  \SetAlgoLined
  Fix $t=0$ and $J=1$\;
  Generate the variable $w_1(0)$\;
  Fix Stopping = False\;
  \While{Stopping = False}{
  \While{$\max_{1\le j \leq J} w_j(t) < 1 - \ell$}{Increase $J$ by $1$\;Generate the variable $w_J(t)$\;}
  \While{$J > 1$}{
  Decrease $J$ by $1$ and $t$ by $1$\;
  Generate $w_1(t), \ldots w_J(t)$\;
  \While{$\max_{1\le j \leq J} w_j(t) < 1 - \ell$}{Increase $J$ by $1$\;Generate the variable $w_J(t)$\;}}
  Decrease $t$ by $1$\;
  Generate $w_1(t)$\;
  Fix Stopping = $\{w_1(t) \geq \ell\}$\;
  }
  Fix $\nu = \delta_0$\;
  \For{$s$ from $t+1$ to $-1$}{Generate the variables $w_1(s),\ldots,w_{||\nu||}(s)$ \; Set $\fm = \max \{ \nu_j + w_j(s) \text{ for $1 \leq j\leq\Vert\nu\Vert$}\}$\; Add $\delta_\fm$ to $\nu$\;}
  Set $\fm = \max \{ \nu_j + w_j(0) \text{ for }1 \leq j\leq\Vert\nu\Vert\}$\;
  \textbf{Return:} $\fm - \nu_1$\;
\end{algorithm}
\end{center}

\subsection*{The perfect simulation algorithm}

We now describe more precisely the perfect simulation algorithm. Let $F$ be a probability distribution satisfying \eqref{eqn:FAssumption}, we fix $\ell \in [0,1)$ such that $F([1-\ell,1]) \in (0,1)$. The algorithm requires the construction of an array of i.i.d. random variables with common distribution $F$ until the random variable $T^*$ can be constructed.

To construct $T^*$ as well as $\bar{\fm}(0)$ from the sequence $\{w_j(t), j \in \N, t \in \Z\}$, one only needs to consider a.s. finitely many elements of this set, as $\{T^* = t\}$ is a measurable function of
\[
\{ w_1(t)\}\cup\{w_j(t+k), 1 \leq j \leq k \leq |t| \}
\] 
and $\bar{\fm}(0)$ is a measurable function of
\[
\{ w_1(T^*)\}\cup\{ w_{j}(T^*+k), 1 \leq j \leq k \leq |T^*| \}.
\]
Therefore, we can explore triangular arrays of the form
\[
  \{ w_1(t)\}\cup  \{w_j(t+k), 1 \leq j \leq k \leq |t|\},
\]
progressively decreasing $t$ until time $T^*$ is detected. Once this random variable is known, we construct the random variable $\bar{\fm}(0)$ using the procedure described in Theorem \ref{thm1} from the previously discovered random variables. A possible implementation is described in Algorithm~\ref{algo:main}. We show a graphical representation of a run of Algorithm~\ref{algo:main} in Figure \ref{fig:triangulararray}.

\begin{figure}[ht]
\centering
\includegraphics[width=3.5in]{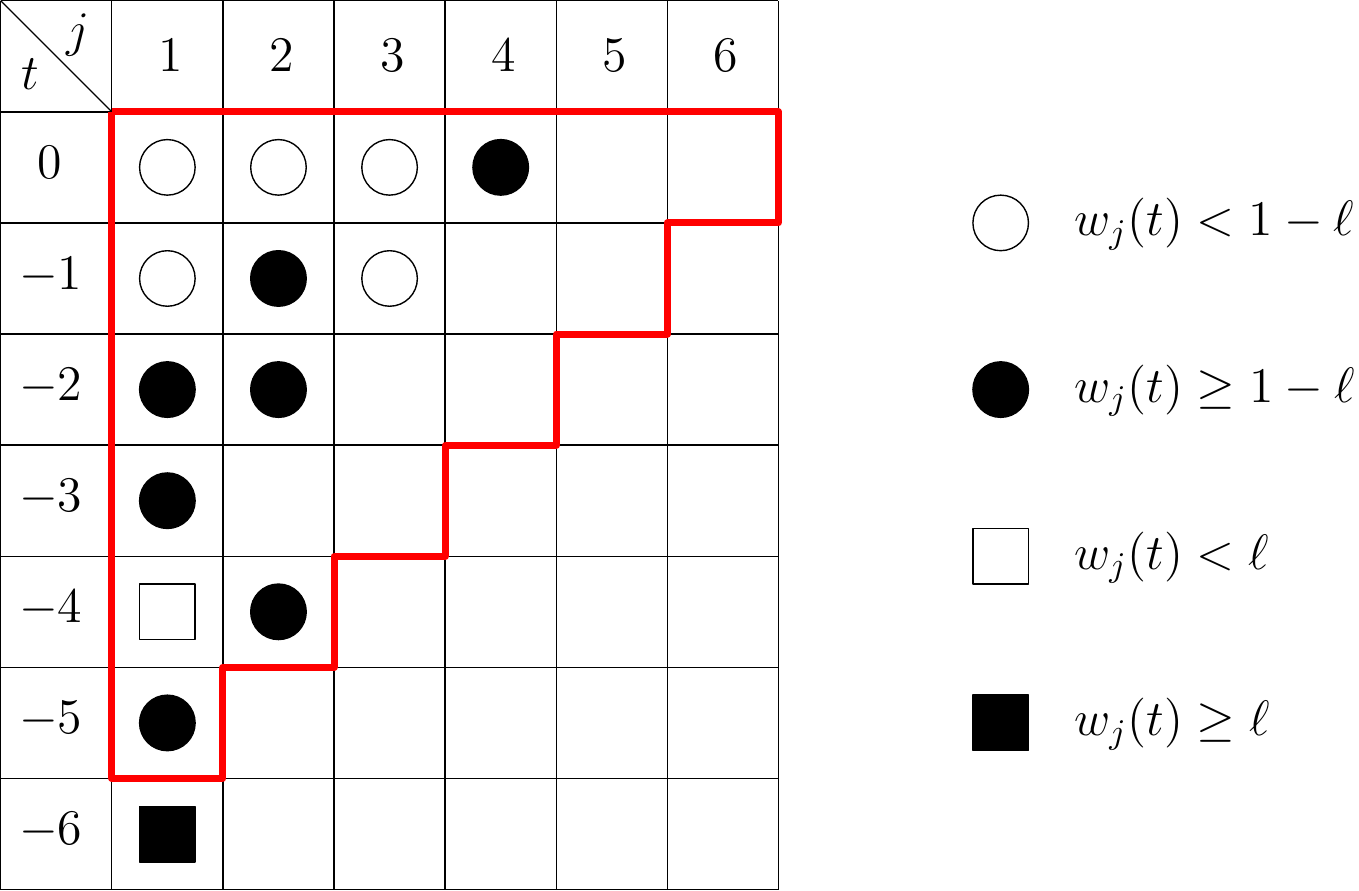}
\caption{Illustration of the execution of Algorithm \ref{algo:main} on an example, in the case where $\ell<1-\ell$. The variables sampled until the Boolean variable Stopping becomes True are pictured by black/white squares and disks. One searches for the first time $T^*$ such that every line of index $T^*+1\leq t \leq 0$ has at least one black disk between columns $1$ and $t-T^*$ and such that there is a black square in position $(T^*,1)$. The full triangular array of variables used in the construction of $\nu$ is enclosed by a red boundary.}
\label{fig:triangulararray}
\end{figure}

We observe that this algorithm has a complexity of $(T^*)^2$, as it is the number of steps needed to generate the variable $\bar{\fm}(0)$. It is worth noting that $-T^*$ can be constructed as the first hitting time of $0$ of the Markov chain $(X_n)$ with initial state
\[
X_0=\min\{j\geq1, w_j(0)\geq 1-\ell\}
\]
and with transition probabilities defined for all $j \geq 2$ and $i \geq j$ by
\[
    P(j,j-1) = 1 - (1-p)^{j-1} \text{ and } P(j,i) = p(1-p)^{i-1}
\]
where $p = \P(w_1(0) \geq 1-\ell)$, with
\[
  P(1,0) = \P(w_1(0) \geq \ell), \quad \P(1,1) = \P(1 - \ell \leq w_1(0) < \ell), \quad P(1,j) = p(1-p)^{j-1} \text{ for }j \geq 2.
\]
The quantity $X_n$ corresponds to the value of the variable $J$ at the end of the period when $t=-n$ in Algorithm \ref{algo:main}. In the example shown in Figure \ref{fig:triangulararray}, we have
\[
(X_0,X_{-1},X_{-2},X_{-3},X_{-4},X_{-5},X_{-6})=(4,3,2,1,2,1,0).
\]
Note that $T^*$ has exponential tails.

The choice of the parameter $\ell$ may have an important effect on the behaviour of the average complexity $\E((T^*)^2)$ of the algorithm. We plotted $\ell \mapsto \E((T^*)^2)$ in Figure \ref{fig:averageComplexity}, when the charge distribution is given by $F(\mathrm d x) = \ind{x \leq 1 } e^{x-1} \mathrm d x$. Additionally, as $p \to 0$, the quantity $\E((T^*)^2)$ grows to $\infty$. We estimated $\E((T^*)^2)$ for $F = p\delta_1+(1-p)\delta_{-\infty}$ and plotted this quantity as a function of $p$ in Figure \ref{fig:overComplexity}.

\begin{figure}[ht]
\centering
\includegraphics[width=6cm]{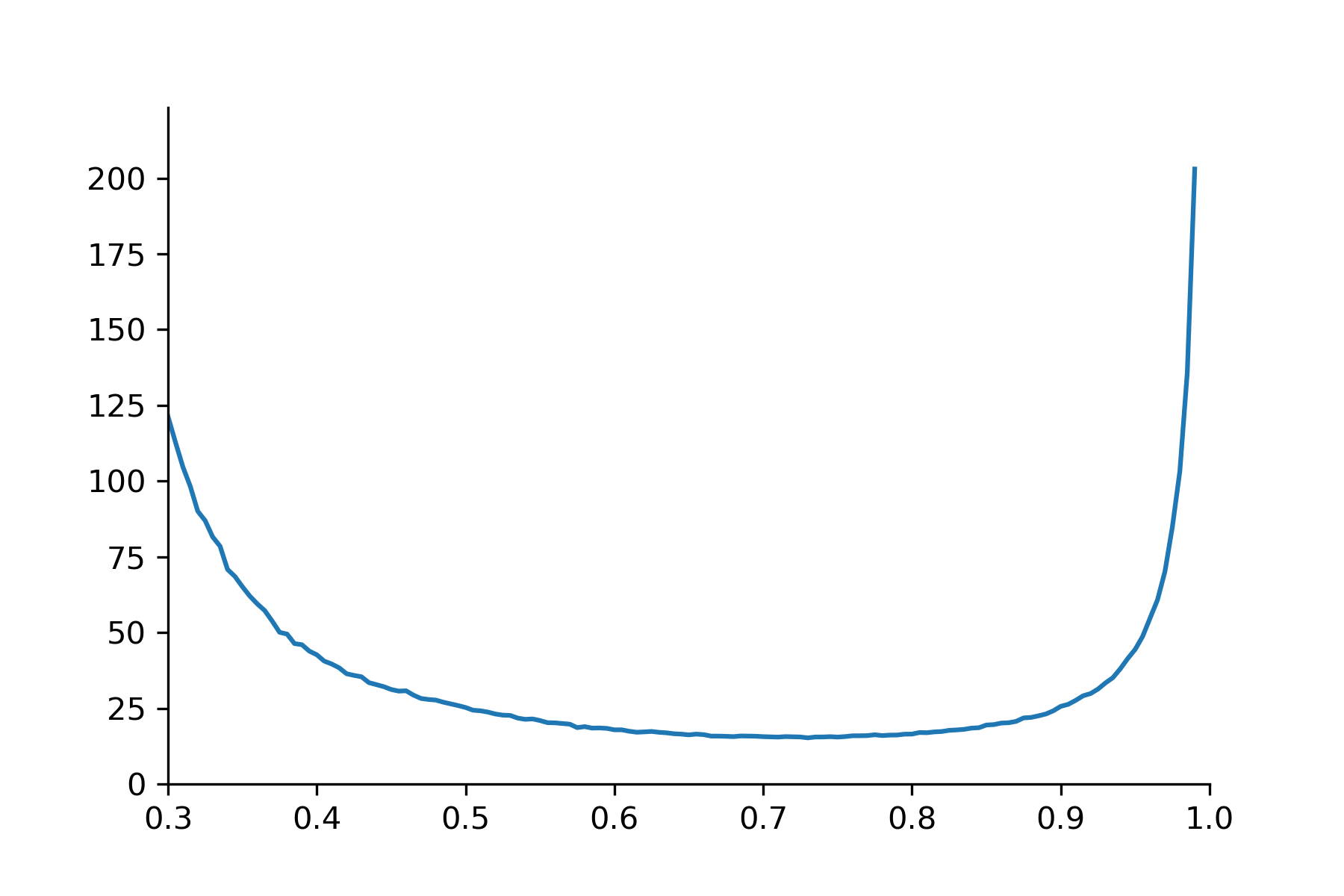}
\caption{Dependency in the parameter $\ell$ of the complexity of Algorithm \ref{algo:main} with a charge distribution $F(\mathrm d x) = \ind{x \leq 1 } e^{x-1} \mathrm d x$. The figure was obtained with a Monte Carlo simulation of $N=10^4$ copies of $T^*$ for $100$ different values of $\ell$. For this charge distribution, the  
Monte Carlo simulations give $C(F) = 0.4432\pm0.0006$.}
\label{fig:averageComplexity}
\end{figure}

We observe in Figure \ref{fig:averageComplexity} that different choices of the value $\ell$ can have a dramatic impact on the efficiency of Algorithm \ref{algo:main}. Choosing a value $\ell$ too small has the effect of making the first appearance of a triangular event too late. On the other hand, if $\ell$ is too big then with high probability, one will have $w_1(T) \leq \ell$, and thus the first ``successful'' triangular event will appear much later. For the distribution $F$ we chose, it appears that an optimal choice of $\ell$ seems to be around $\ell = 0.7$, which balances between these two extremes.

\begin{figure}[ht]
\centering
\includegraphics[width=6cm]{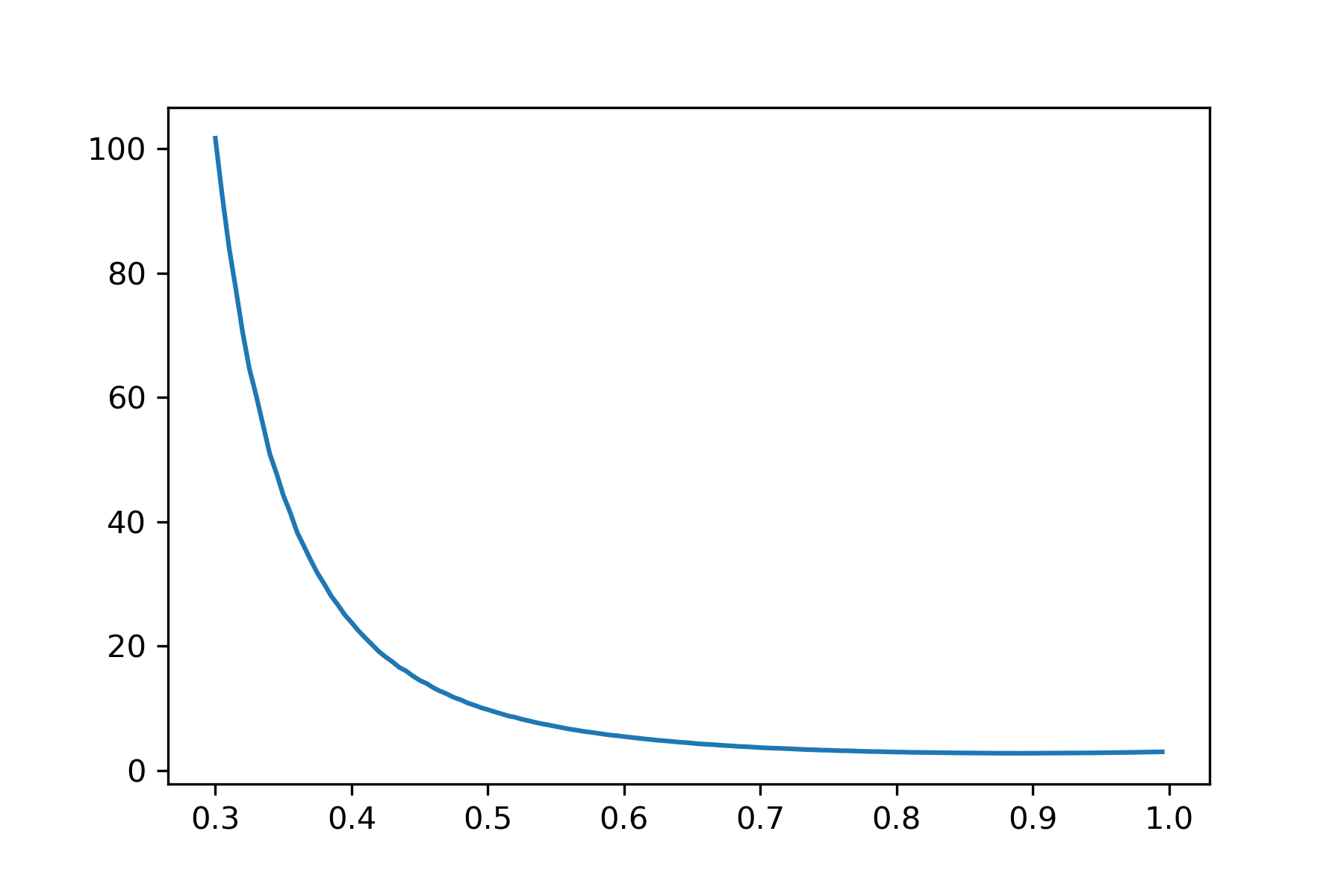}
\caption{Dependency in the parameter $p$ of the complexity of Algorithm \ref{algo:main} applied to the detection of the longest path in the Barak-Erd\H{o}s graph with parameter $p$. Figure obtained through Monte Carlo simulation of $N = 10^5$ copies of $T^*$ for 120 different values of $p$.}
\label{fig:overComplexity}
\end{figure}

We observe in Figure \ref{fig:overComplexity} that if $F$ puts a large mass on the negative half-line, the complexity of Algorithm \ref{algo:main} can become quite large. The function $p \mapsto \E(T^*)^2$ grows at least exponentially in $1/p$ as $p \to 0$ in the Barak-Erd\H{o}s graph, but we were not able to obtain a good estimate of this rate of increase.

\section{Further directions of research}
\label{sec:further}

In this article we considered last passage percolation on the directed complete graph, which has a total order on its vertex set. One extension of this would be to construct a perfect simulation algorithm for so-called directed slab graphs \cite{DFK12} where the set of vertices is only partially ordered. Another possible extension would be to add i.i.d.\ vertex weights with a distribution that has a finite essential supremum. In both cases, as well as in the setting considered in this paper, one should be able to obtain a perfect simulation algorithm if one replaces the i.i.d. weights by more general stochastic recursions with stationary drivers, as was considered in \cite{FK03}.

As discussed in the previous section, the complexity of our perfect simulation algorithm may dramatically vary with $\ell$. In the case of $F(\mathrm d x) = \ind{x \leq 1 } e^{x-1} \mathrm d x$ presented in Figure \ref{fig:averageComplexity}, there seems to be a unique optimal choice for $\hat\ell$ around $0.7$. It would be interesting to find some classes of distributions $F$ for which one has good bounds on the optimal value $\hat\ell$.

Yet another research direction would be the estimation of the constants appearing
for last passage percolation on a 2-dimensional version of the Barak-Erd\H{o}s directed graph
on the set $\N \times \N$ and whose edges are as follows:
if $u=(u_1,u_2), v=(v_1, v_2) \in \N \times \N$ are two vertices such that
$i_1 \le j_1$, $i_2 \le j_2$, then declare the pair $(u,v)$ as an edge 
directed from $u$ to $v$ with probability $p$, independently over
all such pairs. Then maximum length $L_n$ of all paths from $(1,1)$ to $(n, n^a)$, 
for a certain $a >0$, rescaled appropriately, converges weakly \cite{KT13} to 
a random variable having  a Tracy-Widom distribution depending on two parameters.
The estimation of these parameters is an open problem.

\section*{Acknowledgements}

We thank the referees for suggestions to improve the exposition.

SF was partially supported by the RFBR collaborative grant 19-51-15001 and TK, BM and SR were partially supported by the CNRS PRC collaborative grant CNRS-193-382 with the common title 
``Asymptotic and analytic properties of stochastic ordered graphs and infinite bin models''.

\newpage
\Addresses

\end{document}